\documentclass{article}
\usepackage{graphicx}
\usepackage{amsmath}
\usepackage{amsthm}
\usepackage{caption}
\providecommand{\keywords}[1]{\textbf{Keywords} #1}
\newtheorem{remark}{Remark}

\newtheorem{theorem}{Theorem}

\begin{document}

\title{Stabilized subgrid multiscale finite element formulation for advection-diffusion-reaction equation with variable coefficients coupled with Stokes-Darcy equation}

\author{ Manisha Chowdhury  ,  B.V. Rathish Kumar \thanks{ Email addresses: chowdhurymanisha8@gmail.com (M. Chowdhury) and drbvrk11@gmail.com (B.V.R. Kumar)  } }
      
\date{Indian Institute of Technology Kanpur \\ Kanpur, Uttar Pradesh, India}

\maketitle
\begin{abstract}
In this paper subgrid multiscale stabilized finite element method for Advection-Diffusion-Reaction (ADR) equation coupled with Stokes-Darcy flow problem has been studied. Here the advection velocity involved in ADR equation obeys Stokes-Darcy flow equation. In this study the approach of algebraic approximation of stabilization parameter has been considered. Further apriori error estimation has been elaborately carried out.
\end{abstract}

\keywords{Stokes-Darcy-Brinkman equation $\cdot$ Advection-diffusion-reaction equation $\cdot$  Subgrid scale method $\cdot$ A priori error estimation  }
 

\section{Introduction}
Transport phenomena, mathematically expressed in terms of Advection-Diffusion-Reaction(ADR) equation has always been an active area of research in the fields of Biomedical Engineering, Environmental Sciences, Chemical Engineering etc. Our previous works \cite{RefE},\cite{RefI} have focused on transport equation with spatially variable diffusion and advection coefficients. In this paper we have considered only diffusion coefficients as variable and the advection velocity comes from Stokes-Darcy flow problem. There are few studies \cite{RefA}, \cite{RefC} available in literature on coupled Stokes-Darcy\ transport equation, but none of them has studied stabilized subgrid multiscale finite element method for both the equations. Being most general stabilization method now-a-days subgrid method has been considered more suitable finite element method to be dealt with. It involves split of the unknown true solution is chosen to be the standard Galerkin finite element solution and obtaining the unresolvable solution in terms of the known resolvable one, we will finally arrive at the subgrid formulation. \cite{RefE} has derived an expression of the stabilization parameter for subgrid formulation of ADR equation with variable coefficients and another study \cite{RefF} has  found out the same for Stokes-Darcy equation. Both these studies have followed the approach of algebraic approximation of the parameter. Combining those results we will have the required stabilization parameter for this coupled problem. Here further we have derived apriori error estimation the coupled problem.  \vspace{1mm} \\
The paper is organised as: section 2 presents model problem along with weak formulation and subgrid formulation for the weakly coupled problem. In the next section we have carried out apriori error estimation for this stabilized method with the help of interpolation estimates for introducing projection operators. 

\section{Statement of the problem}
 Let $\Omega$ be an open bounded domain in $R^d$, d=2,3. Here we have worked with two dimensional model for the sake of simplicity in further derivation, but it can be extended to three dimensional model straightforward. \vspace{1mm}\\
For an incompressible solvent fluid the Stokes-Darcy (or Brinkman) equation representing its flow is to find $u: \Omega \rightarrow R^2$ and $p: \Omega \rightarrow R$ such that
\begin{equation}
\begin{split}
- \mu \Delta \textbf{u} + \sigma \textbf{u} + \bigtriangledown p & = \textbf{f} \hspace{2mm} in \hspace{2mm} \Omega\\
\bigtriangledown \cdot \textbf{u} &= 0 \hspace{2mm} in \hspace{2mm} \Omega\\
\textbf{u} &= \textbf{0} \hspace{2mm} on \hspace{2mm} \partial\Omega  \\
\end{split}
\end{equation} 
where \textbf{u}= ($u_1,u_2$),p and $\mu$ are velocity, pressure and viscosity of the fluid, $\sigma$ is the inverse of permeability and \textbf{f} is the body force.\vspace{2 mm}\\
And the ADR equation with spatially variable coefficients representing the concentration c of transporting solute in the same domain $\Omega$ along with homogeneous Dirichlet boundary condition is to find c: $\Omega \rightarrow R$ such that,
\begin{equation}
\begin{split}
- \bigtriangledown \cdot \tilde{\bigtriangledown} c + \textbf{u} \cdot \bigtriangledown c + \alpha c & = g \hspace{2mm} in \hspace{2mm} \Omega \\
c &= 0 \hspace{2mm} on \hspace{2mm} \Omega \\
\end{split}
\end{equation}
where the notation, $\tilde{\bigtriangledown}: = (D_1 \frac{\partial}{\partial x}, D_2 \frac{\partial}{\partial y})$ \\
$D_1, D_2$ are spatially variable diffusion coefficients along x-axis and y-axis respectively, $\alpha$ is the reaction coefficient and g denotes the source of solute mass. Here we make some assumptions on the coefficients as follows:\vspace{1mm}\\
(i) $\mu$, $\sigma$ and $\alpha$ are positive constants.\vspace{1mm}\\
(ii) The diffusion coefficients $D_1, D_2$ are continuous functions on bounded domain $\Omega$and hence bounded. Let $D_{1l},D_{2l}$ be upper bounds of them respectively.\vspace{1mm}\\
(iii) The body force $\textbf{f} \in (L^2(\Omega))^2$ and source $g \in L^2(\Omega)$

\subsection{Weak formulation}
Let us consider the standard space $V= H^1_0(\Omega)$ as the admissible space for both velocity fields and concentration and the space $Q=L^2(\Omega)$ for pressure. \vspace{1mm}\\
The weak formulation is to find $(\textbf{u},p) \in V \times V \times Q$ and $c \in V$ such that 
\begin{equation}
\begin{split}
a_S(\textbf{u}, \textbf{v})-b(\textbf{v},p)+ b(\textbf{u},q)& =l_S(\textbf{v}) \hspace{2mm} \forall \textbf{v} \in V \times V , q \in Q \\
a_T(c,d) & = l_T(d) \hspace{2mm} \forall d \in V
\end{split}
\end{equation}
where $a_S(\textbf{u},\textbf{v})= \int_{\Omega} \mu \bigtriangledown \textbf{u}:\bigtriangledown \textbf{v} + \sigma \int_{\Omega} \textbf{u} \cdot \textbf{v}$ and \hspace{1mm} $b(\textbf{v},q)= \int_{\Omega} (\bigtriangledown \cdot \textbf{v}) q$ \vspace{1 mm} \\
 $a_T(c,d) = \int_{\Omega} \tilde{\bigtriangledown}c \cdot \bigtriangledown d + \int_{\Omega} d \textbf{u} \cdot \bigtriangledown c + \alpha\int_{\Omega}cd $ \vspace{1 mm} \\
 $l_S (\textbf{v})= \int_{\Omega} \textbf{f} \cdot \textbf{v}$ and  $l_T(d)= \int_{\Omega} gd$ \vspace{1mm}\\
The stability of the continuous problem (3) has been shown in \cite{RefL} and \cite{RefI}.
\subsection{Subgrid formulation}
Let $\Omega$ be discretized into total $n_{el}$ number of sub-domains $\Omega_k$ for k=1,2,...,$n_{el}$ and $h_k$ be the diameter of each sub-domain respectively. Let h= $\underset{k=1,2,...,n_{el}}{max} h_k$ and $\tilde{\Omega}=\bigcup_{k=1}^{n_el} \Omega_k $ be the union of interior elements. Let $V_h$ and $Q_h$ be the suitable finite dimensional subspaces of $V$ and $Q$ respectively where \vspace{1mm}\\
$V_h= \{ v \in V: v(\Omega_k)= \mathcal{P}^2(\Omega_k)\} $ and  $Q_h= \{ q \in Q_s : q(\Omega_k)= \mathcal{P}^1(\Omega_k)\}$ \vspace{1 mm}\\
where $\mathcal{P}^1(\Omega_k)$ and $\mathcal{P}^2(\Omega_k)$ denote complete polynomial of order 1 and 2 respectively over each $\Omega_k$ for k=1,2,...,$n_{el}$. \vspace{1mm}\\
Now the standard Galerkin finite element formulation of (3) is to find $(\textbf{u}_h,p_h) \in V_h \times V_h\times Q_h$ and $c \in V_h$ such that 
\begin{equation}
\begin{split}
a_S(\textbf{u}_h, \textbf{v}_h)-b(\textbf{v}_h,p_h)+ b(\textbf{u}_h,q_h)& =l_S(\textbf{v}_h) \hspace{2mm} \forall \textbf{v}_h \in V_h \times V_h , q_h \in Q_h \\
a_T(c_h,d_h) & = l_T(d_h) \hspace{2mm} \forall d_h \in V_h
\end{split}
\end{equation}
As we earlier mention that for subgrid formulation the finite element solution will be taken as resolvable scale and we will obtain the subgrid formulation by expressing the unresolvable scale in terms of known solution. Hence the subgrid formulation for (3) is to find $(\textbf{u}_h,p_h) \in V_h \times V_h\times Q_h$ and $c \in V_h$ such that 
\begin{equation}
\begin{split}
a_S(\textbf{u}_h, \textbf{v}_h)-b(\textbf{v}_h,p_h)+ b(\textbf{u}_h,q_h)+ \int_{\Omega'} (-\mathcal{L}_1^* \textbf{V}_h) [\tau] \mathcal{L}_1 \textbf{U}_h   & = l_S(\textbf{v}_h)+\int_{\Omega'} (-\mathcal{L}_1^* \textbf{V}_h) [\tau] \textbf{F}\\
 \forall \hspace{1mm} \textbf{V}_h & = (\textbf{v}_h,q_h) \in V_h \times V_h \times Q_h \\
a_T(c_h,d_h)+ \int_{\Omega'} (-\mathcal{L}_2^* d_h) \tau_3 \mathcal{L}_2 c_h & = l_T(d_h) + \int_{\Omega'} (-\mathcal{L}_2^* d_h) \tau_3 g \\
& \forall \hspace{1mm} d_h \in V_h
\end{split}
\end{equation}
where
\[ \mathcal{L}_1 \textbf{U} =
\begin{bmatrix}
    - \mu(c) \Delta \textbf{u} + \sigma \textbf{u} + \bigtriangledown p\\
    \bigtriangledown \cdot \textbf{u} \\
\end{bmatrix}
,its \hspace{1mm} adjoint \mathcal{L}_1^* \textbf{U}=
\begin{bmatrix}
   - \mu(c) \Delta \textbf{u} + \sigma \textbf{u} - \bigtriangledown p\\
    -\bigtriangledown \cdot \textbf{u} \\     
\end{bmatrix} 
, \textbf{F} =
\begin{bmatrix}
    \textbf{f}      \\
    0      
\end{bmatrix} 
\]
$\mathcal{L}_2 c  = -\bigtriangledown \cdot  \widetilde{\bigtriangledown} c + \overline{u} \cdot \bigtriangledown c + \alpha c $ and its $adjoint$ $\mathcal{L}_2^* c  = -\bigtriangledown \cdot \widetilde{\bigtriangledown} c - \overline{u} \cdot  \bigtriangledown c + \alpha c $\\
the stabilization parameters[],[] are obtained as
\[ [\tau] = diag(\tau_1, \tau_1, \tau_2)=
\begin{bmatrix}
  (c_1 \frac{\mu}{h^2}+  \sigma)^{-1}  &  0 & 0     \\
  0 & (c_1 \frac{\mu}{h^2}+  \sigma)^{-1}   & 0     \\
  0 & 0 & c_2 \mu       
\end{bmatrix} 
and  \hspace{1mm}\tau_3 = (\frac{9D}{4h^2} + \frac{3U}{2h} + \alpha )^{-1}
\]
\section{Error estimation}
In this section we are going to derive apriori estimate in $V$ norm that is in standard $H^1$ norm. Before that we introduce projection operators and carry out error splitting as follows: \vspace{1mm} \\
Let $\textbf{e}=(e_{\textbf{u}},e_p,e_c)$ denote the error where the components are $e_{\textbf{u}}=(e_{u1},e_{u2})= (u_1-u_{1h}, u_2-u_{2h}), e_p= (p-p_h)$ and $e_c=(c-c_h)$. Let us introduce the projection operator corresponding to each component as the following, \vspace{1mm} \\
(i)For any $\textbf{u} \in V \times V $ let there exist an interpolation $P^h_{\textbf{u}}:  V \times V \longrightarrow  V_h \times V_h $ satisfying 
 $b(\textbf{u}-P^h_{\textbf{u}}\textbf{u}, q_h)=0$ \hspace{2mm} $\forall q_h \in Q_h$ 
 and component wise satisfy $L^2$-orthogonality condition i.e. for any $u_i \in V$ \hspace{1mm} $(u_i-P^h_{u_i}u_i,v_h)=0$ \hspace{1mm} $\forall v_h \in V_s^h$ for i=1,2. \vspace{1mm} \\
(ii) Let $P^h_p: Q \longrightarrow Q_h$ be the $L^2$ orthogonal projection given by \\ $\int_{\Omega}(p-P^h_pp)q_h=0$ \hspace{1mm} $\forall p \in Q, \hspace{1mm} \forall q_h \in Q_h$ \vspace{2mm}\\
(iii) Similarly let $P^h_{c}: V \longrightarrow V_h$ satisfy $\int_{\Omega}(c-P^h_c c)v_h=0$ \hspace{1mm} $\forall c \in V, \hspace{1mm} \forall v_h \in V_h$ \vspace{2mm}\\
Now each components of the error can be split into two parts interpolation part, $E^I$ and auxiliary part, $E^A$ as follows: \vspace{1mm}\\
$e_{u1}=(u_1-u_{1h})=(u_1-P^h_{u1}u_1)+(P^h_{u1}u_1-u_{1h})= E^{I}_{u1}+ E^{A}_{u1}$ \vspace{1mm}\\
Similarly, $e_{u2}=E^{I}_{u2}+ E^{A}_{u2}$,
$e_{p}=E^{I}_{p}+ E^{A}_{p}$, and 
$e_{c}=E^{I}_{c}+ E^{A}_{c}$ \vspace{1mm}\\
\textbf{Interpolation estimates \cite{RefJ,RefK}} : for any true solution $u$ with regularity upto (m+1)
\begin{equation}
\|u-P^h_u u\|_l = \|E^I_u\|_l \leq C(m,\Omega) h^{m+1-l} \|v\|_{m+1} 
\end{equation}
where l is a positive integer and C is a constant depending on m and the domain. For l=0 and 1 it  implies standard $L^2(\Omega)$ and $H^1(\Omega)$ norms respectively. We will use $\| \cdot \|$ instead of $\| \cdot \|_0$ to denote $L^2(\Omega)$ norm. \vspace{2mm} \\
Now we carry out apriori error estimate in two parts. In first part we will bound auxiliary error and in the later part using the result obtained in first part we will find  final form of apriori estimate. 

\begin{theorem} \textbf{Auxiliary error estimate}:
For velocity $\textbf{u}_h=(u_{1h},u_{2h})$, pressure $p_h$ and concentration $c_h$ belonging to $V_h \times V_h , Q_h , V_h$ respectively satisfying (5), assume sufficient regularity of exact solution in equations (1)-(2). Then there exists constants $C_1$ and $C_2$, depending upon $\textbf{u}$,p and c respectively, such that
\begin{equation}
\begin{split}
\|E^A_{u1}\|^2_V + \|E^A_{u2}\|^2_V+ \|E^A_p\|_Q^2  & \leq C_1(\textbf{u},p) h^2\\
\|E^A_c\|_V^2 & \leq C_2(c)h^2
\end{split}
\end{equation}
\end{theorem}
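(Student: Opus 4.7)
The plan is to derive a Galerkin-orthogonality style error equation for each subproblem, split the error into its interpolation and auxiliary parts via the projections $P^h_{\mathbf u},P^h_p,P^h_c$ defined above, and then test with the auxiliary part itself so that coercivity of the left-hand side controls $\|E^A\|$ while the interpolation estimate (6) controls the right-hand side at order $h^2$. Because the advection velocity $\mathbf u$ in (2) enters (5)\textsubscript{2} only as a known datum once the Stokes--Darcy block has been solved, the two estimates in the theorem can be established in sequence: first bound $E^A_{u1},E^A_{u2},E^A_p$, then use those to bound $E^A_c$.

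For the Stokes--Darcy block, I would first observe that the exact solution satisfies the stabilized equation (5)\textsubscript{1} identically (the strong residual $\mathcal L_1\mathbf U-\mathbf F$ vanishes on the exact solution), so subtracting the discrete equation gives a consistency relation in $\mathbf e_{\mathbf u},e_p$. Writing $\mathbf e_{\mathbf u}=E^I_{\mathbf u}+E^A_{\mathbf u}$, $e_p=E^I_p+E^A_p$ and moving the interpolation parts to the right, I would then test with $\mathbf V_h=(E^A_{\mathbf u},-E^A_p)$. The symmetric part of $a_S$ provides the coercive bound $\mu\|\nabla E^A_{\mathbf u}\|^2+\sigma\|E^A_{\mathbf u}\|^2$, the $b$-terms cancel in the diagonal test, and the remaining off-diagonal pressure control is recovered through the discrete inf-sup condition for the P2--P1 pair together with the definition of $P^h_{\mathbf u}$ (which kills $b(\mathbf u-P^h_{\mathbf u}\mathbf u,q_h)$). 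The stabilization contribution involving $[\tau]$ can be either absorbed on the left (since $[\tau]$ is positive definite) or bounded on the right by Cauchy--Schwarz and the element-wise $\mathcal O(h^{-2})$ scaling of $\mathcal L_1^*$. Finally I would apply (6) with $m=2,l=1$ for each velocity component and $m=1,l=0$ for the pressure, producing the uniform $h^2$ estimate on the RHS and hence (7)\textsubscript{1}.

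For the ADR block the reasoning is parallel. Consistency of (5)\textsubscript{2} with the continuous equation (2) gives an equation for $e_c=E^I_c+E^A_c$; I would test with $d_h=E^A_c$. Coercivity of $a_T$ produces $\min(D_{i\ell})\|\nabla E^A_c\|^2+\alpha\|E^A_c\|^2$ (the skew-symmetric convective part $\int d_h\mathbf u\cdot\nabla c$ drops on the diagonal since $\nabla\cdot\mathbf u=0$ and $c=0$ on $\partial\Omega$), while the interpolation remainders on the RHS are handled by Cauchy--Schwarz in combination with (6) applied at $m=2,l=1$; the $L^2$-orthogonality property of $P^h_c$ is what kills the lower-order reaction and convection contributions that would otherwise only give $h$. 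The stabilization integrand is again controlled via the algebraic form of $\tau_3$ and a standard inverse inequality. Since $\mathbf u$ appears explicitly in $a_T$ through the convective coefficient, the constant $C_2$ legitimately depends on $\|\mathbf u\|_\infty$ (hence ultimately on the data of (1)), which is exactly the weak coupling indicated in the statement.

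The main technical obstacle is the simultaneous handling of the pressure term and the stabilization residuals in the first block: a naïve test does not recover an estimate on $E^A_p$ because the bilinear form $a_S$ is only coercive on the velocity. The inf-sup condition has to be invoked on $E^A_p\in Q_h$ together with the interpolation property $b(\mathbf u-P^h_{\mathbf u}\mathbf u,q_h)=0$, and care must be taken that the stabilization cross-terms coming from $[\tau]\mathcal L_1$ do not spoil the inf-sup argument; this is why the algebraic form of $[\tau]$ with the $h^2/\mu$ and $\mu$ scalings is essential, since it makes the residual terms $\mathcal O(h)$ in a graph-norm sense and thus absorbable. Everything else in the proof is a bookkeeping application of Cauchy--Schwarz, Young's inequality, and the interpolation bound (6).
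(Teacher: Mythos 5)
Your overall strategy coincides with the paper's: form the consistency/Galerkin-orthogonality relation by subtracting the exact-solution identity from the stabilized discrete equations, split each error into $E^I+E^A$ with the stated projections, test with the auxiliary part, use coercivity of $a_S$ (resp.\ $a_T$) on the left, and control the right-hand side by Cauchy--Schwarz, Young's inequality and the interpolation estimate (6), absorbing the stabilization residuals via the $\mathcal{O}(h^2)$ scaling of $[\tau]$ and $\tau_3$. The one genuinely different ingredient is your treatment of the pressure: you invoke the discrete inf-sup condition for the P2--P1 pair to recover $\|E^A_p\|_Q$. The paper does \emph{not} use inf-sup anywhere; it tests with $q_h=E^A_p$ so that the two $b$-terms cancel, bounds the stabilization terms by assuming elementwise pointwise bounds $M_{ik}$ on the discrete auxiliary errors and their derivatives, and lets the lone $\frac{\epsilon_2}{2}\|E^A_p\|^2$ produced by Young's inequality in $S_8$ stand in for pressure control. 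Your inf-sup route is the more standard and more defensible way to get the $\|E^A_p\|_Q$ term onto the left-hand side (the paper's left-hand side contains no pressure term to begin with), so on this point your outline is arguably stronger than the printed proof. Similarly, your use of skew-symmetry of the convective term (via $\nabla\cdot\mathbf{u}=0$) to drop $\int_\Omega E^A_c\,\mathbf{u}\cdot\nabla E^A_c$ is cleaner than the paper's $T_5$, which is retained and bounded by Young's inequality.

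Two small corrections. First, with the paper's sign convention $a_S(\mathbf{u},\mathbf{v})-b(\mathbf{v},p)+b(\mathbf{u},q)$, the $b$-terms cancel when you test with $q_h=+E^A_p$, not $-E^A_p$; with your choice they add rather than cancel. Second, the $L^2$-orthogonality of $P^h_c$ kills only the reaction term $\alpha\int_\Omega E^I_c\,E^A_c$; it does not kill the convective interpolation term $\int_\Omega E^A_c\,\mathbf{u}\cdot\nabla E^I_c$, which involves $\nabla E^I_c$ rather than $E^I_c$. The paper keeps this as $T_4$ and bounds it by Cauchy--Schwarz and Young, which still yields $h^2$ because the $H^1$-interpolation error of order $h$ appears squared; so your concern that these terms ``would otherwise only give $h$'' is unfounded, and no orthogonality is needed there.
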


\begin{proof}
We divide the proof into two parts: first part contains estimation of auxiliary error for velocity and pressure components separately and in the later part we will find bound for auxiliary error corresponding to concentration.
Now for true solution (5) becomes
\begin{equation}
\begin{split}
a_S(\textbf{u}, \textbf{v}_h)-b(\textbf{v}_h,p)+ b(\textbf{u},q_h)& =l_S(\textbf{v}_h) \hspace{2mm} \forall \textbf{v}_h \in V_h \times V_h , q_h \in Q_h \\
a_T(c,d_h) & = l_T(d_h) \hspace{2mm} \forall d_h \in V_h
\end{split}
\end{equation}
\textbf{First part:} Subtracting first equation of (8) from that of (5) we will have, $\forall \textbf{v}_h \in V_h \times V_h , q_h \in Q_h $
\begin{multline}
a_S(\textbf{u}-\textbf{u}_h, \textbf{v}_h)-b(\textbf{v}_h,p-p_h)+ b(\textbf{u}-\textbf{u}_h,q_h)= \int_{\Omega'} (\mathcal{L}_1^* \textbf{V}_h) [\tau] (\textbf{F}- \mathcal{L}_1 \textbf{U}_h )
\end{multline}
After explicitly writing the bilinear forms and using the error splitting we have the above equation as follows
\begin{multline}
\mu \int_{\Omega} \bigtriangledown E^I_{\textbf{u}} : \bigtriangledown \textbf{v}_h + \mu \int_{\Omega} \bigtriangledown E^A_{\textbf{u}} : \bigtriangledown \textbf{v}_h + \sigma \int_{\Omega} E^I_{\textbf{u}} \cdot \textbf{v}_h + \sigma \int_{\Omega} E^A_{\textbf{u}} \cdot \textbf{v}_h - \int_{\Omega}(\bigtriangledown \cdot \textbf{v}_h) E^I_p - \\
\int_{\Omega} (\bigtriangledown \cdot \textbf{v}_h) E^A_p+ \int_{\Omega} (\bigtriangledown \cdot E^I_{\textbf{u}}) q_h + \int_{\Omega} (\bigtriangledown \cdot E^A_{\textbf{u}}) q_h \\
= \int_{\Omega'} \tau_1 (\mu \Delta v_{1h} - \sigma v_{1h}+ \frac{\partial q_h}{\partial x})(- \mu \Delta (E^I_{u1}+E^A_{u1})+ \sigma (E^I_{u1}+E^A_{u1}) + \frac{\partial E^I_p}{\partial x} + \frac{\partial E^A_p}{\partial x})+ \\
\quad \int_{\Omega'} \tau_1 (\mu \Delta v_{2h} - \sigma v_{2h}+ \frac{\partial q_h}{\partial y})(- \mu \Delta (E^I_{u2}+E^A_{u2})+ \sigma (E^I_{u2}+E^A_{u2}) + \frac{\partial E^I_p}{\partial y} + \frac{\partial E^A_p}{\partial y})+ \\
\quad \int_{\Omega} \tau_2 (\bigtriangledown \cdot \textbf{v}_h)(\bigtriangledown \cdot E^I_{\textbf{u}}) + \int_{\Omega} \tau_2 (\bigtriangledown \cdot \textbf{v}_h)(\bigtriangledown \cdot E^A_{\textbf{u}}) \hspace{40mm}
\end{multline}
Since this holds for all $\textbf{v}_h \in V_h \times V_h$ and $q_h \in Q_h$, replacing them by $E^A_{\textbf{u}}$ and $E^A_p$ respectively  and using properties of projection operators, we finally have
\begin{multline}
\mu \int_{\Omega} \bigtriangledown E^A_{\textbf{u}} : \bigtriangledown E^A_{\textbf{u}} + \sigma \int_{\Omega} E^A_{\textbf{u}} \cdot E^A_{\textbf{u}} = \int_{\Omega'} \tau_1 (\mu \Delta E^A_{u1} - \sigma E^A_{u1} + \frac{\partial E^A_{p}}{\partial x})(- \mu \Delta E^I_{u1}+ \sigma E^I_{u1} +\\
\quad  \frac{\partial E^I_p}{\partial x})+\int_{\Omega'} \tau_1 (\mu \Delta E^A_{u1} - \sigma E^A_{u1}+ \frac{\partial E^A_{p}}{\partial x})(- \mu \Delta E^A_{u1}+ \sigma E^A_{u1} + \frac{\partial E^A_p}{\partial x})+ \int_{\Omega'} \tau_1 (\mu \Delta E^A_{u2} - \sigma E^A_{u2}\\
\quad + \frac{\partial E^A_{p}}{\partial y})(- \mu \Delta E^I_{u2}+ \sigma E^I_{u2} + \frac{\partial E^I_p}{\partial y}) + \int_{\Omega'} \tau_1 (\mu \Delta E^A_{u2} - \sigma E^A_{u2}+ \frac{\partial E^A_{p}}{\partial y})(- \mu \Delta E^A_{u2}+ \sigma E^A_{u2} + \frac{\partial E^A_p}{\partial y})\\
\quad +\int_{\Omega'} \tau_2 (\bigtriangledown \cdot E^A_\textbf{u})(\bigtriangledown \cdot E^I_{\textbf{u}}) + \int_{\Omega'} \tau_2 (\bigtriangledown \cdot E^A_\textbf{u})(\bigtriangledown \cdot E^A_{\textbf{u}})
 - \mu \int_{\Omega} \bigtriangledown E^I_{\textbf{u}} : \bigtriangledown E^A_{\textbf{u}} 
 - \int_{\Omega} (\bigtriangledown \cdot E^I_{\textbf{u}}) E^A_p \\
= S_1 + S_2+ S_3 + S_4+S_5 + S_6 +S_7 + S_8  \hspace{2mm}(say) \hspace{35 mm}
\end{multline}
Now we estimate each of the above terms separately. Before that here we like to make an important observation: by the virtue of chosen finite element spaces it can be clearly said that every element belonging to $V_h$ and $Q_h$ and their derivatives all are bounded functions over each sub-domain $\Omega_k$. Therefore let us consider the positive numbers $M_{1k}, M_{2k},M_{3k},M_{4k},M_{5k},M_{1k}', M_{2k}', M_{3k}', M_{4k}', M_{5k}'$ as bounds for $E^A_{u1}, \Delta E^A_{u1}, \frac{\partial E^A_p}{\partial x}, \frac{\partial E^A_{u1}}{\partial x}, \frac{\partial E^A_{u1}}{\partial y}, E^A_{u2}, \Delta E^A_{u2}, \frac{\partial E^A_p}{\partial y}, \frac{\partial E^A_{u2}}{\partial x}, \frac{\partial E^A_{u2}}{\partial y}$ respectively. \vspace{1mm}\\
To bound $T_1$ simply multiplying the terms, then applying Cauchy-Schwarz inequality over each of them and finally using bounds for $E^As$ we will have
\begin{equation}
\begin{split}
S_1 & \leq \mid \tau_1 \mid \{(\sum_{k=1}^{n_{el}}(\mu^2 M_{2k}+ \sigma \mid \mu \mid M_{1k}+\mid \mu \mid M_{3k})) \| E^I_{u1}\|_2 + (\sum_{k=1}^{n_{el}}(\sigma \mid \mu \mid M_{2k} + \\
& \quad \sigma^2 M_{1k}+ \mid \sigma \mid M_{3k})) \|E^I_{u1}\| + (\sum_{k=1}^{n_{el}}(\mid \mu \mid M_{2k}+ \sigma M_{1k}+ M_{3k}))\|E^I_p\|_1 \} \\
& \leq C \mid \tau_1 \mid \{(\sum_{k=1}^{n_{el}}(\mu^2 M_{2k}+ \sigma \mid \mu \mid M_{1k}+\mid \mu \mid M_{3k})) \| u_1\|_2 + h^2 (\sum_{k=1}^{n_{el}}(\sigma \mid \mu \mid M_{2k}  \\
& \quad + \sigma^2 M_{1k}+ \mid \sigma \mid M_{3k}))  \|u_1\|_2 + (\sum_{k=1}^{n_{el}}(\mid \mu \mid M_{2k}+ \sigma M_{1k}+ M_{3k}))\|p\|_1 \}
\end{split}
\end{equation}
Applying similar arguments we have the following bounds
\begin{equation}
\begin{split}
S_2 & \leq \mid \tau_1 \mid \sum_{k=1}^{n_{el}} (\mu^2 M_{2k}^2 + 2 \sigma \mid \mu \mid M_{1k} M_{2k}+ \sigma^2 M_{1k}^2+ M_{3k}^2)\\
S_3 & \leq C \mid \tau_1 \mid \{(\sum_{k=1}^{n_{el}}(\mu^2 M_{2k}'+ \sigma \mid \mu \mid M_{1k}'+\mid \mu \mid M_{3k}')) \| u_2\|_2 + h^2 (\sum_{k=1}^{n_{el}}(\sigma \mid \mu \mid M_{2k}'  \\
& \quad + \sigma^2 M_{1k}'+ \mid \sigma \mid M_{3k}'))  \|u_2\|_2 + (\sum_{k=1}^{n_{el}}(\mid \mu \mid M_{2k}'+ \sigma M_{1k}'+ M_{3k}'))\|p\|_1 \} \\
S_4 & \leq \mid \tau_1 \mid \sum_{k=1}^{n_{el}}(\mu^2 M_{2k}'^2 + 2 \sigma \mid \mu \mid M_{1k}' M_{2k}'+ \sigma^2 M_{1k}'^2+ M_{3k}'^2)\\
S_5 & \leq \mid \tau_2 \mid C h \sum_{k=1}^{n_{el}}(M_{4k}\|u_1\|_2 + M_{5k}' \|u_2\|_2)\\
S_6 & \leq \mid \tau_2 \mid \sum_{k=1}^{n_{el}}(M_{4k}^2 + M_{5k}'^2) 
\end{split}
\end{equation}
Applying Cauchy-Schwarz and Young's inequality we get
\begin{equation}
\begin{split}
S_7 & = -\mu \int_{\Omega} (\frac{\partial E^I_{u1}}{\partial x} \frac{\partial E^A_{u1}}{\partial x} + \frac{\partial E^I_{u1}}{\partial y} \frac{\partial E^A_{u1}}{\partial y}+ \frac{\partial E^I_{u2}}{\partial x} \frac{\partial E^A_{u2}}{\partial x}+ \frac{\partial E^I_{u2}}{\partial y} \frac{\partial E^A_{u2}}{\partial y}) \\
\end{split}
\end{equation}
\begin{equation}
\begin{split}
& \leq \mid \mu \mid \{\frac{1}{2 \epsilon_1} (\|\frac{\partial E^I_{u1}}{\partial x}\|^2+ \|\frac{\partial E^I_{u1}}{\partial y}\|^2+\|\frac{\partial E^I_{u2}}{\partial x}\|^2+\|\frac{\partial E^I_{u2}}{\partial y}\|^2)+ \frac{\epsilon_1}{2} (\|\frac{\partial E^A_{u1}}{\partial x}\|^2+ \\
& \quad \|\frac{\partial E^A_{u1}}{\partial y}\|^2+ \|\frac{\partial E^A_{u2}}{\partial x}\|^2+ \|\frac{\partial E^A_{u2}}{\partial y}\|^2) \} \\
& \leq \mid \mu \mid \{ C \frac{h^2}{ 2\epsilon_1}  (\|u_1\|_2^2+ \|u_2\|_2^2)+ \frac{\epsilon_1}{2} (\|\frac{\partial E^A_{u1}}{\partial x}\|^2+ \|\frac{\partial E^A_{u1}}{\partial y}\|^2+ \|\frac{\partial E^A_{u2}}{\partial x}\|^2+ \|\frac{\partial E^A_{u2}}{\partial y}\|^2)  \}
\end{split}
\end{equation}
Applying similar argument
\begin{equation}
\begin{split}
S_8  = \int_{\Omega}(\frac{\partial E^I_{u1}}{\partial x }+ \frac{\partial E^I_{u2}}{\partial y }) E^A_p & \leq (\|\frac{\partial E^I_{u1}}{\partial x } \| + \|\frac{\partial E^I_{u2}}{\partial y }\|) \|E^A_p\| \\
& \leq C \frac{h^2}{ \epsilon_2}  (\|u_1\|_2^2+ \|u_2\|_2^2) + \frac{\epsilon_2}{2} \|E^A_p\|^2
\end{split}
\end{equation}
This completes finding bounds for each of the terms in the right hand side of (11). Now we put all the results obtained above into the equation (11) and take out those terms, which are common with the terms of $LHS$, in the $LHS$ from the $RHS$ of the equation. Consequently in the $RHS$ we remain with the terms multiplied by $h^2$ as the stabilization parameters are of order $h^2$. Now we consider $\epsilon_1, \epsilon_2$ in such a manner that all the coefficients in $LHS$ can be made positive. After that taking minimum of all those coefficients and dividing both the sides by that we finally arrive at the completion of the proof as follows:
\begin{equation}
\|E^A_{u1}\|_V^2+ \|E^A_{u2}\|_V^2+ \|E^A_p\|_Q^2
\leq C(\textbf{u},p) h^2
\end{equation}

\textbf{Second part:} Subtracting second equation of (8) from that of (5) we will have, $\forall d_h \in V_h $
\begin{equation}
a_T(c-c_h,d_h)= \int_{\Omega'}(\mathcal{L}_2^* d_h) \tau_3 (g-\mathcal{L}_2 c_h)= \int_{\Omega'}(\mathcal{L}_2^* d_h) \tau_3 \mathcal{L}_2(c- c_h)
\end{equation}
After explicitly writing the bilinear forms and using the error splitting we have the above equation as follows
\begin{multline}
\int_{\Omega} \tilde{\bigtriangledown} E^I_c \cdot \bigtriangledown d_h+ \int_{\Omega} \tilde{\bigtriangledown} E^A_c \cdot \bigtriangledown d_h + \int_{\Omega} d_h \textbf{u} \cdot \bigtriangledown E^I_c + \int_{\Omega} d_h \textbf{u} \cdot \bigtriangledown E^A_c + \alpha\int_{\Omega} E^I_c d_h + \\
\quad \alpha\int_{\Omega} E^A_c d_h= \int_{\Omega'}(-\bigtriangledown \cdot \widetilde{\bigtriangledown} d_h - \overline{u} \cdot  \bigtriangledown d_h + \alpha d_h) \tau_3 ( -\bigtriangledown \cdot  \widetilde{\bigtriangledown} E^I_c + \overline{u} \cdot \bigtriangledown E^I_c + \alpha E^I_c) \\
\int_{\Omega'}(-\bigtriangledown \cdot \widetilde{\bigtriangledown} d_h - \overline{u} \cdot  \bigtriangledown d_h + \alpha d_h) \tau_3 ( -\bigtriangledown \cdot  \widetilde{\bigtriangledown} E^A_c + \overline{u} \cdot \bigtriangledown E^A_c + \alpha E^A_c)
\end{multline}
Since this holds for all $d_h \in V_h $, replacing it by $E^A_c$ in the above equation and using properties of projection operators, we finally have

\begin{multline}
\int_{\Omega} \tilde{\bigtriangledown} E^A_c \cdot \bigtriangledown E^A_c + \alpha \int_{\Omega} E^A_c E^A_c= \int_{\Omega'}(-\bigtriangledown \cdot \widetilde{\bigtriangledown} E^A_c - \overline{u} \cdot  \bigtriangledown E^A_c + \alpha E^A_c) \tau_3 ( -\bigtriangledown \cdot  \widetilde{\bigtriangledown} E^I_c + \\
\quad \overline{u} \cdot \bigtriangledown E^I_c + \alpha E^I_c) +\int_{\Omega'}(-\bigtriangledown \cdot \widetilde{\bigtriangledown} E^A_c - \overline{u} \cdot  \bigtriangledown E^A_c + \alpha E^A_c) \tau_3 ( -\bigtriangledown \cdot  \widetilde{\bigtriangledown} E^A_c + \overline{u} \cdot \bigtriangledown E^A_c + \alpha E^A_c) \\
\quad -\int_{\Omega} \tilde{\bigtriangledown} E^I_c \cdot \bigtriangledown E^A_c- \int_{\Omega} E^A_c \textbf{u} \cdot \bigtriangledown E^I_c - \int_{\Omega} E^A_c \textbf{u} \cdot \bigtriangledown E^A_c \\
=T_1+T_2+T_3+T_4+T_5 \hspace{1mm} (say) \hspace{60mm}
\end{multline}
We will follow the same procedure as the previous part. Under the same observation let us consider the positive numbers $M_{6k}, M_{7k}, M_{8k}, M_{7k}', M_{8k}'$ as bounds for $E^A_c, \frac{\partial E^A_c}{\partial x}, \frac{\partial^2 E^A_c}{\partial x^2}, \frac{\partial E^A_c}{\partial x}, \frac{\partial E^A_c}{\partial y}$ respectively over each sub-domain. \vspace{1mm}\\
First simply multiplying the terms of $T_1$, then applying Cauchy-Schwarz inequality over each of them and finally using bounds for $E^As$ we can find estimate of $T_1$ as follows:
\begin{equation}
\begin{split}
T_1 & \leq \mid \tau_3 \mid (\sum_{k=1}^{n_{el}}(D_{1l} M_{8k}+ D_{2l} M_{8k}' +  D_{u1} M_{7k}+ D_{u2} M_{7k}'+ \alpha M_{6k})) \{( D_{1l}+ D_{2l})  \\
& \quad \|E^I_c\|_2+(\bar{D}_{u1}+\bar{D}_{u2})\|E^I_c\|_1 + \alpha \|E^I_c\|\}\\
& \leq \mid \tau_3 \mid C(\sum_{k=1}^{n_{el}}(D_{1l} M_{8k}+ D_{2l} M_{8k}' + D_{u1} M_{7k}+ D_{u2} M_{7k}'+ \alpha M_{6k})) \{( D_{1l}+ D_{2l}) \\
& \quad  + h (\bar{D}_{u1}+\bar{D}_{u2}) + h^2 \alpha \} \|c\|_2
\end{split}
\end{equation}
Applying similar arguments we have the following bounds
\begin{equation}
\begin{split}
T_2 & \leq \mid \tau_3 \mid \sum_{k=1}^{n_{el}}\{(D_{1l} M_{8k}+ D_{2l} M_{8k}' +  D_{u1} M_{7k}+ D_{u2} M_{7k}'+ \alpha M_{6k})(D_{1l} M_{8k}+ \\
& \quad D_{2l} M_{8k}' +  \bar{D}_{u1} M_{7k}+ \bar{D}_{u2} M_{7k}'+ \alpha M_{6k})\}
\end{split}
\end{equation}
For estimating next terms we are going to use Cauchy-Schwarz and Young's inequalities as follows:
\begin{equation}
\begin{split}
T_3 & = -\int_{\Omega}(D_1 \frac{\partial E^I_c}{\partial x} \frac{\partial E^A_c}{\partial x}+ D_2 \frac{\partial E^I_c}{\partial y} \frac{\partial E^A_c}{\partial y}) \\
& \leq \frac{D_{1l}}{2 \epsilon_3} \|E^I_c\|_1^2 + \frac{D_{2l} \epsilon_3}{2}(\|\frac{\partial E^I_c}{\partial x}\|^2 + \|\frac{\partial E^I_c}{\partial y}\|^2) \\
& \leq  \frac{D_{1l}}{2 \epsilon_3} h^2 \|c\|_2^2+ \frac{D_{2l} \epsilon_3}{2}(\|\frac{\partial E^I_c}{\partial x}\|^2 + \|\frac{\partial E^I_c}{\partial y}\|^2)\\
\end{split}
\end{equation}
\begin{equation}
\begin{split}
T_4 & = - \int_{\Omega}(u_1\frac{\partial E^I_c}{\partial x} + u_2 \frac{\partial E^I_c}{\partial y} ) E^A_c \\
& \leq \frac{(u_{1l}+u_{2l})}{2 \epsilon_4} \| E^I_c\|_1^2 + \frac{(u_{1l}+u_{2l})}{2} \epsilon_4 \|E^A_c\|^2 \\
& \leq \frac{(u_{1l}+u_{2l})}{2 \epsilon_4} h^2 \| c\|_2^2 + \frac{(u_{1l}+u_{2l})}{2} \epsilon_4 \|E^A_c\|^2 
\end{split}
\end{equation}
Similarly
\begin{equation}
\begin{split}
T_5 & = - \int_{\Omega}(u_1\frac{\partial E^A_c}{\partial x} + u_2 \frac{\partial E^A_c}{\partial y} ) E^A_c  \\
& \leq \frac{(u_{1l}+u_{2l})}{2 \epsilon_4}( \| \frac{\partial E^A_c}{\partial x}\|^2 + \|\frac{\partial E^A_c}{\partial y}\|^2)+ \frac{(u_{1l}+u_{2l})}{2} \epsilon_4 \|E^A_c\|^2
\end{split}
\end{equation}
Now combining all the results obtained above into the equation (20) and proceeding as explained at the end of the first part we finally get
\begin{equation}
\|\frac{\partial E^A_c}{\partial x}\|^2+ \|\frac{\partial E^A_c}{\partial y}\|^2 + \|E^A_c\|^2 \leq C_2(c) h^2
\end{equation}
This completes the proof.
\end{proof}

\begin{theorem} \textbf{Apriori error estimate}:
Assuming the same condition as in the previous theorem
\begin{equation}
\begin{split}
\|u_1-u_{1h}\|^2_V + \|u_2-u_{2h}\|^2_V+ \|p-p_h\|_Q^2 + \|c-c_h\|_V^2 & \leq C'(\textbf{u},p,c) h^2\\
\end{split}
\end{equation}
where $C'$ is a constant depending upon $\textbf{u},p,c$.
\end{theorem}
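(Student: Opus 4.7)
The plan is to obtain Theorem 2 as a direct corollary of Theorem 1 combined with the interpolation estimate (6). Because the error has already been split componentwise as $e = E^I + E^A$, the first step is simply to apply the triangle inequality and the elementary bound $\|a+b\|^2 \leq 2\|a\|^2 + 2\|b\|^2$ to each of the four quantities $\|u_1-u_{1h}\|_V^2$, $\|u_2-u_{2h}\|_V^2$, $\|p-p_h\|_Q^2$, $\|c-c_h\|_V^2$, reducing everything to separate bounds on the interpolation and auxiliary pieces.

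For the interpolation pieces I would invoke (6) with the polynomial degrees built into the discrete spaces. Since $V_h$ consists of $\mathcal{P}^2$ functions, taking $m=2$, $l=1$ yields $\|E^I_{u_i}\|_V \leq C h^2 \|u_i\|_3$ for $i=1,2$ and $\|E^I_c\|_V \leq C h^2 \|c\|_3$; since $Q_h$ consists of $\mathcal{P}^1$ functions, taking $m=1$, $l=0$ yields $\|E^I_p\|_Q \leq C h^2 \|p\|_2$. Squaring these produces $O(h^4)$ contributions, which are harmless: for $h \leq 1$ they are dominated by $O(h^2)$ and can simply be absorbed into the right hand side. For the auxiliary pieces I would quote Theorem 1 directly, which already supplies $\|E^A_{u_1}\|_V^2 + \|E^A_{u_2}\|_V^2 + \|E^A_p\|_Q^2 \leq C_1(\mathbf{u},p) h^2$ and $\|E^A_c\|_V^2 \leq C_2(c) h^2$.

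Summing the four componentwise bounds and collecting constants gives the result with
\[
C'(\mathbf{u},p,c) = 2\bigl(C_1(\mathbf{u},p) + C_2(c)\bigr) + 2C\bigl(\|u_1\|_3^2 + \|u_2\|_3^2 + \|p\|_2^2 + \|c\|_3^2\bigr),
\]
where the regularity norms on the right are finite by the ``sufficient regularity of the exact solution'' assumption carried over from Theorem 1.

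There is no genuine obstacle here, since all the heavy lifting (the subgrid consistency argument, the algebraic manipulation of the stabilization terms, the choice of Young parameters $\epsilon_1,\epsilon_2,\epsilon_3,\epsilon_4$) has already been absorbed into Theorem 1. The only point requiring a little care is the bookkeeping of polynomial degrees: one must match $m=2$ for the velocity and concentration spaces against $m=1$ for the pressure space so that every interpolation contribution is at least $O(h^2)$ in the relevant norm, which is what makes the final rate uniform. Once that is verified, the proof reduces to two lines of triangle inequality and substitution.
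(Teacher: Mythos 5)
Your proposal is correct and follows essentially the same route as the paper: split each error into interpolation and auxiliary parts, bound the squared sums via the elementary inequality $\|a+b\|^2 \le 2\|a\|^2 + 2\|b\|^2$, control the interpolation pieces with estimate (6) and the auxiliary pieces with Theorem 1. Your version is in fact slightly more careful than the paper's, since you make explicit the degree bookkeeping ($m=2$ versus $m=1$) and the absorption of the $O(h^4)$ interpolation terms into $O(h^2)$ for $h\le 1$, both of which the paper leaves implicit.
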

\begin{proof}
We prove this result by first applying triangle inequality and then interpolation estimates and the results obtained in the previous theorem as the following
\begin{multline}
\|u_1-u_{1h}\|^2_V + \|u_2-u_{2h}\|^2_V+ \|p-p_h\|_Q^2 + \|c-c_h\|_V^2 \\
= \|E^I_{u1}+ E^A_{u1}\|_V^2 + \|E^I_{u2}+E^A_{u2}\|_V^2 + \|E^I_c+ E^A_c\|_V^2 + \|E^I_c+E^A_c\|_V^2 \hspace{20 mm} \\
\leq \bar{C}(\|E^I_{u1}\|_V^2+\|E^I_{u2}\|_V^2+\|E^I_{p}\|_V^2+\|E^I_{c}\|_V^2+\|E^A_{u1}\|_V^2+\|E^A_{u2}\|_V^2+\|E^A_{p}\|_V^2+\|E^A_{c}\|_V^2) \\
\leq C'(\textbf{u},p,c) h^2 \hspace{100 mm}
\end{multline}
 This completes the proof.
\end{proof}
\begin{remark}
This error estimation is not computable as it depends upon the true solution, but it implies convergence of the stabilized method and the order of convergence for this method is 2. 
\end{remark}

\section{Numerical Experiment}
In this section through a test case based on hydrological importance \cite{RefM}, we have shown subgrid multiscale stabilization method is performing better than Galerkin method for small diffusion for this weakly coupled Stokes-Darcy/ transport equation. We have considered a source of pollutant dispersed into an incompressible fluid in a simple bounded square domain $\Omega= (0,1) \times (0,1)$. The advection velocity comes from the solution of Stokes-Darcy flow problem and the diffusion coefficients are taken as $D_1= 10^{-7} (1+0.02x)^2$, $D_2= 10^{-8}(1+0.02y)^2$ and the reaction term $\alpha=10$. Table 1 represents the comparison between Galerkin method and SGS method for small diffusion. \vspace{1mm}\\
Again for higher diffusion coefficients compared to the reaction term both the Galerkin method and SGS method perform well. Let us consider the values of diffusion coefficients as $D_1=(1+0.02x)^2$, $D_2=0.1(1+0.02y)^2$ and reaction term $\alpha=0.001$. Table 2 presents the comparison of both Galerkin method and SGS method for diffusion dominated case.
\begin{table}
   \centering
    \begin{tabular}{||c c c c c||}
    \hline 
            & Galerkin Method & Galerkin Method & SGS Method & SGS Method \\
   Mesh size & Error & Order of convergence & Error & Order of convergence\\
    \hline \hline
      10      &  0.00049606        &	&0.000265901                       \\
      \hline
      20      &  0.000152662        & 1.70017 &	6.36986	$e^{-5}$&	2.06155 \\
      \hline
      40      &  3.94719 $e^{-5}$   & 1.95144&	1.68468	$e^{-5}$&	1.91879 \\
      \hline
      80      &  1.43178 $e^{-5}$   & 1.46302&	4.73744	$e^{-6}$&	1.83029  \\
      \hline
      160     &  4.79682 $e^{-6}$   & 1.57766&	5.4902	$e^{-7}$&	1.91276  \\
      \hline
      320     &  1.38697 $e^{-6}$   & 1.79015&	4.45223	$e^{-8}$&	1.89875 \\
      \hline
    \end{tabular}
     \caption{ Comparison between Galerkin method and SGS method for small diffusion}
\end{table}

\begin{table}
   \centering
    \begin{tabular}{||c c c c c||}
    \hline 
            & Galerkin Method & Galerkin Method & SGS Method & SGS Method \\
   Mesh size & Error & Order of convergence & Error & Order of convergence\\
    \hline \hline
      10      &  0.000571444        &	&0.000574645                       \\
      \hline
      20      &  0.000139961        & 2.02959 &	0.000141085	&	2.0261 \\
      \hline
      40      &  3.20151 $e^{-5}$   & 2.1282&	3.22881	$e^{-5}$&	2.12749 \\
      \hline
      80      &  7.74389 $e^{-6}$   & 2.04762&	7.82478	$e^{-6}$&	2.04488  \\
      \hline
      160     &  2.02045 $e^{-6}$   & 1.93838&	2.03639	$e^{-6}$&	1.94204  \\
      \hline
      320     &  4.69837 $e^{-7}$   & 2.10444&	4.75272	$e^{-7}$&	2.09919 \\
      \hline
    \end{tabular}
     \caption{ Comparison between Galerkin method and SGS method for diffusion dominated case}
\end{table}
\begin{remark}
The numerical experiment has proved theoretically established result and shown that the order of convergence under SGS method for both the cases is 2 whereas for small diffusion Galerkin method oscillates and for diffusion dominated flow it behaves well as stabilized method.
\end{remark}

\section{Conclusion} 
In this paper we have studied subgrid multiscale stabilization formulation of weakly coupled Stokes-Darcy and transport equation. As well as we have carried out apriori error estimation, which implies the convergence of the method and finally the numerical experiment verifies the theoretically established result.

\end{document}